\numberwithin{equation}{section}  
\newtheorem{theorem}{Theorem}   
\numberwithin{theorem}{section}
\theoremstyle{remark}
\newcommand{\Z}{\mathbb{Z}}
\newcommand{\bs}{\boldsymbol}
\newcommand{\bea}{\begin{eqnarray}}
\newcommand{\eea}{\end{eqnarray}}
\newcommand{\be}{\begin{equation}} 
\newcommand{\ee}{\end{equation}}
\renewenvironment{proof}[1][Proof]{\begin{trivlist} \item[\hskip \labelsep {\bfseries #1:}]}{\qed\end{trivlist}}
\author{Kathrin Bringmann$^1$}
\address{$^1$ Mathematical Institute\\University of
Cologne\\ Weyertal 86-90 \\ 50931 Cologne \\Germany}
\email{kbringma@math.uni-koeln.de}
\thanks{The research of the first author was supported by the Alfried Krupp Prize for Young University Teachers of the Krupp foundation and the research leading to these results has received funding from the European Research Council under the European Union's Seventh Framework Programme (FP/2007-2013) / ERC Grant agreement n. 335220 - AQSER. The third author thanks the University of Cologne and the DFG for their generous support via the University of Cologne postdoc grant DFG Grant D-72133-G-403-151001011, funded under the Institutional Strategy of the University of Cologne within the German Excellence Initiative.}
\author{Jan Manschot$^2$}
\address{$^2$ School of Mathematics\\ Trinity College Dublin\\  Dublin 2\\Ireland}
\email{manschot@maths.tcd.ie}
\author{Larry Rolen$^3$}
\address{$^3$ Department of Mathematics\\
The Pennsylvania State University\\
University Park, PA 16802 
}
\email{larryrolen@psu.edu}
\begin{document}

\title{Identities for generalized Appell functions and the blow-up formula}  

\begin{abstract}
In this paper, we prove identities for a class of generalized Appell functions which are based on the $\operatorname{A}_2$ root lattice. The identities are reminiscent of periodicity relations for the classical Appell function, and are proven using only analytic properties of the functions. Moreover they are a consequence of the blow-up formula for generating functions of invariants of moduli spaces of semi-stable sheaves of rank 3 on rational surfaces. Our proof confirms that in the latter context, different routes to compute the generating function (using the blow-up formula and wall-crossing) do arrive at identical $q$-series. The proof also gives a clear procedure for how to prove analogous identities for generalized Appell functions appearing in generating functions for sheaves with rank $r>3$. 
\end{abstract} 

\maketitle

\section{Introduction}
Paul \'Emile Appell's study of doubly periodic functions led to the definition of the Appell function \cite{Appell:1886}
\begin{equation}
\label{eq:appell}
A(u,v)=A(u,v;\tau):=e^{\pi i u}\sum_{n\in \mathbb{Z}} \frac{(-1)^nq^{\frac{n(n+1)}{2}}e^{2\pi i n v}}{1-e^{2\pi i u}q^n},
\end{equation} 
with $u\in \mathbb{C}\backslash (\mathbb{Z}\tau+\mathbb{Z})$, $v\in \mathbb{C}$, $q:=e^{2\pi i \tau}, \tau \in \mathbb{H}$. Among the intriguing properties of this function are its behavior under elliptic transformations $(u,v)\mapsto (u+n_1\tau+m_1,v+n_2\tau+m_2)$, $n_1,n_2,m_1,m_2\in\Z$ and under modular transformations $(u,v,\tau)\mapsto (\frac{u}{c\tau+d},\frac{v}{c\tau+d},\frac{a\tau+b}{c\tau+d})$ with $\left(\begin{smallmatrix}a & b\\ c & d \end{smallmatrix}\right)\in \operatorname{SL}_2(\mathbb{Z})$. About one century ago, Srinivasa Ramanujan closely studied related $q$-hypergeometric series, and his discoveries led him to coin the term ``mock theta function" for such objects. The precise transformation properties of (\ref{eq:appell}) were only clarified in 2002 by Sander Zwegers  \cite{Zwegers:2002}. In particular, he showed that addition of a non-holomorphic integral to $A(u,v)$ gives a function which transforms as a multi-variable Jacobi form. 

Over the years various generalizations of the Appell function have arisen, for example higher level Appell functions and multi-variable Appell functions $A_{Q}(u,{\bs v})$ with  $Q$ an $n-$dimensional positive definite quadratic form and ${\bs v}\in \mathbb{C}^n$ \cite{Zwegers:2010}. The $A_Q(u,{\bs v})$ differ from $A(u,v)$ by having a more general quadratic form $Q$ in the numerator. Such functions occured as characters of Lie superalgebras \cite{Kac:2013, Semikhatov:2003uc} and also as certain generating functions in the context of rank 2 sheaves on rational surfaces \cite{Bringmann:2010sd, Vafa:1994tf, Yoshioka:1994}. More recently it was established that besides the possibly higher dimensional quadratic form, generalizations of (\ref{eq:appell}) with multiple terms in the denominator also occur as characters of Lie superalgebras \cite{Kac:2013} and for rank $>2$ sheaves \cite{Manschot:2014cca}. These new functions can be expressed as specializations of the following general shape (cf. \cite[equation (0.13)]{Kac:2013} and \cite[equation (4.2)]{Manschot:2014cca}):
\begin{equation} 
\label{eq:AQ} 
A_{Q, \{{\bs  m_j } \}} ({\bs u},{\bs v }) = A_{Q,\{{\bs m}_j\}}({\bs u},{\bs v};\tau):=\sum_{{\bs k}\in \Lambda }\frac{q^{\frac{1}{2}Q({\bs k})}e^{2\pi i {\bs v}\cdot {\bs k}}}{\prod_{\ell=1}^{n_-}(1-q^{ {\bs k}\cdot {\bs m}_\ell}e^{2\pi i u_\ell})},
\end{equation}  
where $Q$ is the quadratic form for the $n_+$-dimensional positive definite lattice $\Lambda$, ${\bs m}_j\in \Lambda^*$ with $\Lambda^*$ the lattice dual to $\Lambda$ and $j=1,\dots,n_-$. Furthermore, the argument ${\bs u}$ is a vector ${\bs u}=(u_1,\dots,u_{n_-})$ with $u_j\in \mathbb{C}\backslash (\mathbb{Z}\tau+\mathbb{Z})$, and ${\bs v}\in \Lambda^* \otimes \mathbb{C}\simeq \mathbb{C}^{n_+}$. In the context of Lie superalgebras, $\{ {\bs m}_j\}$ is a set of pairwise orthogonal vectors \cite{Kac:2013}, but for sheaves this is typically not the case. One can expand the terms in the denominator as geometric series, which leads to an $(n_++n_-)-$dimensional quadratic form of signature $(n_+,n_-)$. Therefore we naturally refer to (\ref{eq:AQ}) as an Appell function with signature $(n_+,n_-)$.   
 
The study of Appell functions with more general signatures than the mock modular objects in Zwegers' thesis is very natural. One expects that there is a more complicated notion of ``completion'' for such objects than is necessary for mock objects. Besides the two contexts mentioned above, these objects have arisen in a number of places in the recent literature on $q$-series; for example, the reader is referred to \cite{BRZ} and \cite{Raum} where it is explained how certain special cases of general type Appell sums can be reduced to products of mock modular objects, and what the relation of these Appell sums are to the theory of $H$-harmonic Maass-Jacobi forms and to the theory of Gromov-Witten potentials. Further examples of Appell-like sums with two factors in the denominator were studied recently by Ismail and Zhang (see Corollary 4.4 of \cite{IsmailZhang}) in the context of Rogers-Ramanujan type identities.

However, the general structure of these objects is still unknown (although results have been obtained in unpublished work of Raum and of Zagier-Zwegers). In this paper, we take a first step towards understanding such objects, which was also the first step in the proof of the modularity results of \cite{BRZ}. Namely, one first considers the error to satisfying the elliptic transformation satisfied by Jacobi forms, and writes this obstruction to transforming as a Jacobi forms in terms of simpler mock-type objects. Here we establish such formulas for our special cases.  A similar analysis could also be performed on many other examples of higher-index Appell sums using the methods used here. 
   
These special cases are instances of (\ref{eq:AQ}) which occur as building blocks of generating functions of topological invariants of moduli spaces of sheaves with rank $r$ on a ruled or rational surface $S$ \cite{Manschot:2014cca}. Expressing such generating functions in terms of generalized Appell functions is a major simplification compared to previously determined generating functions for $r=3$, and also allows one to write completely explicit generating functions for $r>3$ \cite{Manschot:2014cca}. For the surface $\mathbb{P}^2$ and rank $r$ sheaves, the quadratic form $Q$ in (\ref{eq:AQ}) corresponds to the one of the $\operatorname{A}_{r-1}$ root lattice. Moreover, they are naturally divided by classical theta functions of the $\operatorname{A}_{r-1}$ root lattice, such that we arrive at an $\operatorname{A}_{r-1}$ generalization of the Appell-Lerch sum $\mu(u,v):=\frac{A(u,v)}{\vartheta(v)}$ \cite{Zwegers:2002}, {with $\vartheta(z)$ defined in (\ref{eq:varth})}.
 
The identities proved here are implied by the blow-up formula.  This is a well-known formula in algebraic geometry which relates the generating function of Poincar\'e polynomials of semi-stable rank $r$ sheaves on a complex algebraic surface $S$ and the generating function for the blow-up $\phi: \widetilde S\to S$ with exceptional divisor $C_{\rm e}$ \cite{Gottsche:1998, Li:1999, Yoshioka:1996}. Here, we briefly recall the blow-up formula (see \cite{Manschot:2011ym} for more details and references). Let $H_{r,c_1}(z,\tau; S, J)=H_{r,c_1}(z; S, J)$ be the generating function of stack invariants of semi-stable sheaves on the algebraic surface $S$. The generating function sums over the second Chern class of the sheaves and keeps the rank $r$ and first Chern class $c_1$ fixed. The sheaves are $\mu$-semi-stable with respect to the polarization $J$. Then the blow-up formula states:
\be 
\label{blowup}
H_{r,\widetilde{c}_1}\!\left(z;\widetilde{S},\phi^* J\right)= \frac{b_{r,k}(z)}{\eta^r}\,H_{r,c_1}(z; S, J)
\ee
with $\widetilde c_1:=\phi^*c_1-kC_{\rm e}$, and where $b_{r,k}(z)$ and $\eta$ are defined by (\ref{eq:brk}) and (\ref{eq:eta}) respectively. Besides giving an intriguing relation between generating functions for $S$ and $\widetilde S$, the blow-up formula also gives interesting relations for $H_{r,\widetilde c_1}(z;\widetilde S,J)$, since for different $c_1$ and $k$ one can arrive at the same $\widetilde c_1=\phi^*c_1-kC_{\rm e}$. Since determining $H_{r,\widetilde{c}_1}(z;\widetilde{S},\phi^* J)$ gives rather different expressions depending on $\widetilde c_1$ \cite{Manschot:2011ym}, these relations imply surprising and intriguing identities for the generalized Appell functions. For $r=2$ and $S=\mathbb{P}^2$, these identities reduce to known periodicity relations of $A(u,v)$  \cite{Zwegers:2002}.

The identities for $r=3$ and $S=\mathbb{P}^2$ can be found in \cite[Section 4.3]{Manschot:2014cca}. We prove those identities using only analytic properties of the generating functions. This result is desirable from the point of view of $q$-series, since the intricate identities are proven without referring to an underlying meaning of the coefficients as topological invariants of moduli spaces, and further improves the understanding of these complicated objects. The result is also of interest for algebraic geometry, since it shows that the different ``routes'' for calculating $H_{3,c_1}(z;\mathbb{P}^2)$ do lead to identical generating functions \cite{Manschot:2011ym}. It therefore confirms consistency of all ingredients (i.e., suitable polarization, wall-crossing and blow-up formula) going into the calculations. The proof also indicates how to prove similar identities for the generalized Appell functions for $\operatorname{A}_{r-1}$ lattices following from equation (\ref{blowup}) for $r>3$.

\subsection*{Statement of results} 
Let $\vartheta$ be the Jacobi theta function, defined in (\ref{eq:varth}). We also denote by $b_{r,k}$ certain theta functions summing over the root lattice $\operatorname{A}_{r-1}$ (see (\ref{eq:brk}) for the definition). We further let $w:=e^{2\pi i z}$, $z\in \mathbb{C}\backslash (  \tau(\frac{\mathbb{Z}}{3}+\frac{\mathbb{Z}}{2})+(\frac{\mathbb{Z}}{3}+\frac{\mathbb{Z}}{2}))$. 
 
Our main results are the identities given in the following theorems.
Our first identity expresses the difference of two generalized Appell functions with signature $(2,2)$ in terms of higher level Appell functions, the eta function and Jacobi theta function. 
\begin{theorem}\label{lident1}
We have 
\begin{align*}
&\sum_{k_1,k_2\in \mathbb{Z}}\frac{w^{-k_1-2k_2+2}q^{k_1^2+k_2^2+k_1k_2+k_1+k_2}}{(1-w^{2}q^{2k_1+k_2})(1-w^{2}q^{k_2-k_1})}-\sum_{k_1,k_2\in \mathbb{Z}}\frac{w^{-k_1-2k_2+1}q^{k_1^2+k_2^2+k_1k_2+2k_1+2k_2+1}}{(1-w^{2}q^{2k_1+k_2+1})(1-w^{2}q^{k_2-k_1})}\\
&\hspace{-1mm} +\frac{i\eta^3}{\vartheta(2z)}\left(\sum_{k\in \mathbb{Z}}
   \frac{w^{-3k+2}q^{3k^2+2k}}{1-w^{3}q^{3k}}+ \sum_{k\in \mathbb{Z}}
   \frac{w^{-3k+1}q^{3k^2+k}}{1-w^{3}q^{3k}}- \sum_{k\in \mathbb{Z}}
 \frac{w^{-3k+1}q^{3k^2+4k+1}}{1-w^{3}q^{3k+1}}- \sum_{k\in \mathbb{Z}} \frac{w^{-3k+2}q^{3k^2-k}}{1-w^{3}q^{3k-1}}\right)\\
&=\frac{\eta^6\, \vartheta(z)}{\vartheta(2z)^2\,\vartheta(3z)}.
\end{align*}
\label{Iident1}
\end{theorem}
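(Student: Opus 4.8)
The plan is to prove the identity purely analytically by exploiting the fact that both sides, viewed as functions of $z$ (equivalently $w=e^{2\pi iz}$) with $\tau$ fixed, are meromorphic elliptic-type functions whose poles and residues can be completely controlled. The strategy is the standard one for Appell-type identities: I would show that the difference $F(z)$ of the left-hand side minus the right-hand side is a holomorphic elliptic function in $z$ (for fixed $\tau$), hence constant, and then evaluate that constant at one convenient point. First I would record the quasi-periodicity of every building block under $z\mapsto z+1$ and $z\mapsto z+\tau$: the theta quotient $\eta^6\vartheta(z)/(\vartheta(2z)^2\vartheta(3z))$ on the right, the higher-level Appell sums divided by $\vartheta(2z)$ in the middle, and the two signature-$(2,2)$ double sums on the left. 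For each I would compute the multiplier picked up under these two shifts, using the well-known transformation $\vartheta(z+\tau)=-q^{-1/2}w^{-1}\vartheta(z)$, $\vartheta(z+1)=-\vartheta(z)$, together with index shifts $k_i\mapsto k_i+1$ in the lattice sums to re-express the shifted Appell sums. The key arithmetic input is that the $\operatorname{A}_2$ quadratic form $k_1^2+k_2^2+k_1k_2$ in the exponents of $q$ is exactly matched to the theta-quotient multipliers, so that $F(z)$ is genuinely invariant (not merely quasi-invariant) under both generators of the lattice $\mathbb{Z}\tau+\mathbb{Z}$ (or the appropriate $\tfrac13,\tfrac12$-refined lattice dictated by the stated domain of $z$).

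The second and more delicate step is to verify that $F(z)$ has no poles, so that ellipticity forces it to be constant. The individual terms have poles: the left-hand double sums have simple poles wherever $w^2q^{2k_1+k_2}=1$ or $w^2q^{k_2-k_1}=1$, the middle Appell sums have poles where $w^3q^{3k}=1$ or $w^3q^{3k\pm1}=1$, and $1/\vartheta(2z)$ contributes poles at the zeros of $\vartheta(2z)$; the right-hand side has poles at zeros of $\vartheta(2z)^2$ and $\vartheta(3z)$. I would enumerate these candidate poles within a fundamental domain and, at each, compute the residue (or Laurent expansion to second order, where double poles of $\vartheta(2z)^2$ occur) of each side and check cancellation. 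The residues of Appell sums at $1-w^aq^b=1$ are elementary geometric-type expressions, and the residues of the theta quotients are ratios of theta values, so each residue condition becomes a finite theta-function identity. I expect these to collapse via standard three-term (Weierstrass) or Jacobi triple-product relations among $\vartheta$ at arguments $z,2z,3z$.

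Once $F(z)$ is shown to be a holomorphic elliptic function, it is constant in $z$, and the final step is to pin that constant to zero. I would do this by choosing a special value of $z$ (or extracting a single Fourier coefficient in $w$, or taking a $q$-expansion and matching the constant term) where every sum truncates or simplifies; a symmetric point such as a half-period, or the limit isolating the $k_1=k_2=0$ contributions against the leading behavior of the theta quotient, should suffice to evaluate both sides and confirm equality. Alternatively, since the paper advertises that the identity \emph{follows from} the blow-up formula \eqref{blowup}, one could instead read off the constant from the geometric normalization, but the analytic route above is self-contained.

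The main obstacle I anticipate is the pole-cancellation step: matching residues of the two signature-$(2,2)$ sums against those of the higher-level ($\operatorname{A}_2$, level-$3$) Appell sums and the theta quotient requires carefully aligning three different denominator lattices ($w^2q^{2k_1+k_2}$, $w^2q^{k_2-k_1}$, and $w^3q^{3k}$) and tracking how a pole of one double sum coincides with a pole of the middle term and with a zero of $\vartheta(2z)$ or $\vartheta(3z)$. Getting the bookkeeping of which $(k_1,k_2)$ indices produce a given pole location, and verifying that the residues sum to the residue of the right-hand theta quotient, is where the real work lies; the quasi-periodicity and the final constant evaluation should be comparatively routine.
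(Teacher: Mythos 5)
Your overall strategy --- control the behavior under $z\mapsto z+1$ and $z\mapsto z+\tau$, match poles and residues, and conclude by a Liouville-type argument --- is the same as the paper's, but two points need repair. First, the difference $F$ of the two sides is \emph{not} an elliptic function of $z$: both sides pick up the nontrivial multiplier $q^8w^{16}$ under $z\mapsto z+\tau$ (and $q^{8N^2}w^{16N}$ under $z\mapsto z+N\tau$, which is never trivial), so your claim that $F$ is ``genuinely invariant'' and hence constant is false, and no refinement of the lattice fixes this. The correct endgame is in fact stronger and simpler: $F$ satisfies the elliptic transformation law of a Jacobi form of \emph{negative} index, and a function holomorphic in $z$ with $F(z+1)=F(z)$ and $F(z+\tau)=q^aw^bF(z)$, $b>0$, has Fourier coefficients forced to grow like $|q|^{-bk^2/2}$, contradicting convergence of the Laurent expansion on $\mathbb{C}^*$ unless they all vanish. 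So once holomorphy is established, $F\equiv 0$ outright; no evaluation at a special point is needed, nor would ``constant, hence evaluate'' be available under the transformation law that actually holds.

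Second, and more seriously, you treat the verification of the transformation law for the left-hand side as routine bookkeeping via index shifts. It is not: the signature-$(2,2)$ double sums do \emph{not} individually transform with the multiplier $q^8w^{16}$. An index shift leaves an obstruction term carrying the factor $1-w^{12}q^{3k_1+6k_2}$ in the numerator, and the crux of the proof is to cancel this obstruction against the corresponding failure of the level-three Appell sums in the second line of the theorem. In the paper this requires the decomposition \eqref{decompose} of $1-w^{12}q^{3k_1+6k_2}$ along the two denominator factors, reducing everything to single-denominator sums, followed by the two further nontrivial identities \eqref{2.11} and \eqref{2.12} equating signature-$(1,1)$ Appell sums with theta quotients of the form $\frac{i\eta^3}{\vartheta(2z)}(\cdots)$ --- each of which is itself proved by another round of elliptic transformation plus residue matching. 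Your proposal contains no mechanism for producing or proving these auxiliary identities, and without them the quasi-periodicity step, which you describe as comparatively routine, does not go through. The residue computations you flag as the main obstacle are, by contrast, elementary once both sides are normalized by $\vartheta(2z)^2/\eta^6$ as in the paper.
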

\noindent Our second identity is similar to the first, but differs in that the various terms are divided by a cubic theta function $b_{3,k}(z)$.
\begin{theorem}
\label{Id2} 
We have
\begin{align*}
&\frac{1}{b_{3,0}(z)}\sum_{k_1,k_2\in \mathbb{Z}}\frac{w^{-k_1-2k_2+3}q^{k_1^2+k_2^2+k_1k_2-\frac{1}{3}}}{(1-w^{2}q^{2k_1+k_2-1})(1-w^{2}q^{k_2-k_1})} -\frac{1}{b_{3,1}(z)}\sum_{k_1,k_2\in \mathbb{Z}}\frac{w^{-k_1-2k_2+1}q^{k_1^2+k_2^2+k_1k_2+2k_1+2k_2+1}}{(1-w^{2}q^{2k_1+k_2+1})(1-w^{2}q^{k_2-k_1})} \\
&+\frac{i\eta^3}{\vartheta(2z) b_{3,0}(z)}\left(\sum_{k\in \mathbb{Z}}
  \frac{w^{-3k+3}q^{3k^2-\frac{1}{3}}}{1-w^{3}q^{3k-1}}+\sum_{k\in \mathbb{Z}}
  \frac{w^{-3k}q^{3k^2+3k+\frac{2}{3}}}{1-w^{3}q^{3k+1}}\right)\\
&-\frac{i\eta^3}{\vartheta(2z) b_{3,1}(z)}\left( \sum_{k\in \mathbb{Z}}
\frac{w^{-3k+1}q^{3k^2+4k+1}}{1-w^{3}q^{3k+1}}+ \sum_{k\in \mathbb{Z}} \frac{w^{-3k+2}q^{3k^2-k}}{1-w^{3}q^{3k-1}}\right)=0.
\end{align*}
\end{theorem}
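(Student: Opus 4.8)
My plan is to prove both theorems simultaneously by exhibiting them as two instances of the same underlying structural fact, namely that the quantities appearing are the two ``routes'' to computing $H_{3,\widetilde c_1}(z;\widetilde{\mathbb{P}^2},\phi^*J)$ via the blow-up formula (\ref{blowup}), but to extract from this geometric origin a purely analytic statement that can be verified by elliptic-transformation arguments. The strategy is to treat each displayed sum as a function of $z$ (equivalently $w$) with $\tau$ fixed, to show that the left-hand side minus the right-hand side is a meromorphic elliptic-type object whose potential poles all cancel, and then to pin it down by its transformation behavior together with a small number of special values.

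\textbf{Key steps in order.} First, I would rewrite each of the double sums over $(k_1,k_2)\in\mathbb{Z}^2$ as a generalized Appell function $A_{Q,\{\bs m_j\}}(\bs u,\bs v)$ of signature $(2,2)$ attached to the $\operatorname{A}_2$ root lattice, making the quadratic form $k_1^2+k_2^2+k_1k_2$ and the two linear forms $2k_1+k_2$, $k_2-k_1$ in the denominators explicit as the lattice data $Q$ and $\bs m_1,\bs m_2$; the single sums in the middle line are then recognized as level-$3$ Appell functions in the variable $3z$. Second, I would compute the obstruction to ellipticity: following the program described in the introduction, I would shift $z\mapsto z+\tau$ (and $z\mapsto z+\frac{1}{3}$, etc., matching the lattice $\tau(\frac{\mathbb{Z}}{3}+\frac{\mathbb{Z}}{2})+(\frac{\mathbb{Z}}{3}+\frac{\mathbb{Z}}{2})$ in the hypothesis on $z$) and track how each double sum, each level-$3$ sum, and each theta quotient transforms, collecting the error terms. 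Third, I would verify that all these error terms cancel against one another, so that the full left-hand side of Theorem~\ref{Id2} (resp. the difference between the two sides in Theorem~\ref{lident1}) is genuinely elliptic in $z$. Fourth, I would locate all possible poles: the geometric-series denominators $1-w^2q^{\ast}$, $1-w^3q^{3k+\ast}$ and the theta functions $\vartheta(2z),\vartheta(3z),b_{3,j}(z)$ in the denominators can each produce poles, and I would show by residue computations at the relevant values of $z$ that these residues cancel, so the elliptic function is in fact holomorphic, hence constant in $z$. Fifth, I would evaluate that constant (zero for Theorem~\ref{Id2}) by specializing to a convenient value of $z$ or by comparing leading $q$-expansions.

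\textbf{Main obstacle.} The hard part will be the pole/residue bookkeeping in the fourth step. The double Appell sums have poles along the divisors $w^2=q^{-(2k_1+k_2)}$ and $w^2=q^{-(k_2-k_1)}$ for all $k_1,k_2$, and these must be matched termwise against the poles of the level-$3$ Appell sums (whose denominators $1-w^3q^{3k+\ast}$ create poles on $w^3=q^{\ast}$) and against the zeros of the theta functions in the denominators $\vartheta(2z)$, $\vartheta(3z)$, $b_{3,j}(z)$; keeping the combinatorics of which residues cancel against which—across objects living naturally in the variables $z$, $2z$, and $3z$ simultaneously—is delicate, and getting the index shifts in the $k$-sums to align is where the real work lies. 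A secondary subtlety, specific to Theorem~\ref{Id2}, is that division by the distinct cubic theta functions $b_{3,0}(z)$ and $b_{3,1}(z)$ in different groups of terms means the common-denominator cancellation is not visible until one clears denominators and uses a theta identity relating $b_{3,0}$, $b_{3,1}$, $\vartheta(2z)$, and $\vartheta(3z)$; establishing that auxiliary theta identity is a prerequisite I would isolate as a lemma before attempting the residue matching.
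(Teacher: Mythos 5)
Your high-level framework (track the failure of ellipticity, match poles and residues, conclude by rigidity) points in the right direction, but two of your steps, as stated, do not work for these functions, and the step where the actual difficulty lives is left as an assertion. First, the left-hand side of Theorem \ref{Id2} is not ``genuinely elliptic'' in $z$: under $z\mapsto z+\tau$ each constituent picks up a nontrivial automorphy factor (the double Appell sums acquire a factor of the form $q^8w^{16}$ after re-indexing, and the cubic theta functions obey $b_{3,k}(z+\tau)=q^{-1}w^{-2}b_{3,k}(z)$ by \eqref{bshift}), so the whole expression transforms like a Jacobi form of \emph{negative} index rather than being invariant. Consequently your endgame --- ``holomorphic, hence constant, then evaluate the constant at a special point or by a $q$-expansion'' --- is not available. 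The correct conclusion mechanism, and the one the paper uses, is that a holomorphic function satisfying the elliptic transformation law of a Jacobi form of negative index must vanish identically; no evaluation at a special value is needed, and none would suffice, since a nonzero multiplier is incompatible with a nonzero constant. If you literally try to verify ellipticity in your third step, the claimed cancellation of error terms will fail to close.

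Second, ``verify that all these error terms cancel against one another'' is not a proof plan here; it is the theorem restated. The paper's argument turns on a concrete algebraic device you do not anticipate: after computing $f(z+\tau)$ minus the multiplier times $f(z)$, one inserts factorizations such as $1-w^{12}q^{3k_1+6k_2-3}=\left(1-w^2q^{k_2-k_1}\right)(\cdots)+\left(1-w^2q^{2k_1+k_2-1}\right)(\cdots)$ to collapse each signature $(2,2)$ Appell sum into signature $(2,1)$ sums with the single denominator $1-w^2q^{k_2}$, and then observes that the result organizes into groups carrying the common prefactors $(1+w^9q^4)$, $(1+w^9q^5)$, $w^6q^4$, $w^3q^2$. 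Matching these against the identical prefactors produced by the level-$3$ Appell sums reduces Theorem \ref{Id2} to four simpler sub-identities, which are then treated as a vector-valued system ($z\mapsto z+\tau$ permutes them, while $z\mapsto z+2\tau$ yields a genuine negative-index law) followed by residue checks. Finally, the auxiliary theta identity relating $b_{3,0}$, $b_{3,1}$, $\vartheta(2z)$, $\vartheta(3z)$ that you propose to isolate as a lemma is neither used nor needed: one simply multiplies through by $b_{3,0}b_{3,1}$ and uses the elementary shift \eqref{bshift}, and the only interaction between the two groups of terms is the cancellation of two explicit summands proportional to $b_{3,0}(z)$ and $b_{3,1}(z)$ that emerge from the shifted double sums.
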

\noindent Our third identity is similar to the previous first two identities.
\begin{theorem}\label{Id3}
We have   
\begin{align*}
&\frac{1}{b_{3,0}(z)}\sum_{k_1,k_2\in \mathbb{Z}}\frac{w^{-k_1-2k_2}q^{k_1^2+k_2^2+k_1k_2}}{(1-w^{2}q^{2k_1+k_2})(1-w^{2}q^{k_2-k_1})} -\frac{1}{b_{3,1}(z)}\sum_{k_1,k_2\in \mathbb{Z}}\frac{w^{-k_1-2k_2-1}q^{k_1^2+k_2^2+k_1k_2+k_1+k_2+\frac{1}{3}}}{(1-w^{2}q^{2k_1+k_2+1})(1-w^{2}q^{k_2-k_1})}\\ &+\frac{2i\eta^3}{\vartheta(2z) b_{3,0}(z)}\sum_{k\in \mathbb{Z}}
  \frac{w^{-3k}q^{3k^2}}{1-w^{3}q^{3k}}-\frac{i\eta^3}{\vartheta(2z) b_{3,1}(z)}\left( \sum_{k\in \mathbb{Z}}
\frac{w^{-3k+1}q^{3k^2-2k+\frac{1}{3}}}{1-w^{3}q^{3k-1}}+ \sum_{k\in \mathbb{Z}} \frac{w^{-3k-1}q^{3k^2+2k+\frac{1}{3}}}{1-w^{3}q^{3k+1}}\right)\\
&=\frac{\eta^6\, \vartheta(z)}{\vartheta(2z)^2\,\vartheta(3z)\,b_{3,0}(z)} .
\end{align*}
\end{theorem}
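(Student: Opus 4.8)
The plan is to follow the same purely analytic strategy that underlies Theorems~\ref{lident1} and~\ref{Id2}: move everything to one side, set
\[
D(z) := \mathrm{LHS} - \mathrm{RHS},
\]
and show that $D$ is a meromorphic Jacobi-type form in $z$ (for fixed $\tau$) with a prescribed multiplier, that it is in fact holomorphic, and finally that after clearing this multiplier by a suitable theta factor it becomes a bounded elliptic function, hence constant by Liouville, with the constant pinned to zero by one $q$-coefficient. The first and decisive task is to determine the elliptic transformation behavior of each building block under $z\mapsto z+1$ and $z\mapsto z+\tau$ (equivalently $w\mapsto w$ and $w\mapsto wq$).

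Each signature-$(2,2)$ double sum fails to be elliptic on its own: after the substitution $w\mapsto wq$ one shifts the summation indices $(k_1,k_2)$ by a lattice vector, which almost reproduces the sum but leaves a residual coming from the terms that fall outside the shifted region. As in Zwegers' treatment of $A(u,v)$, this residual collapses one of the two denominator factors and reduces to a single-denominator (higher-level) Appell sum over the $\mathrm{A}_2$ lattice. The terms in the theorem carrying the prefactors $i\eta^3/(\vartheta(2z)\,b_{3,0}(z))$ and $i\eta^3/(\vartheta(2z)\,b_{3,1}(z))$ are precisely these reductions, the factor $i\eta^3/\vartheta(2z)$ arising from the theta identity that rewrites the collapsed geometric series as a quotient of classical theta functions. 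I would verify that, in the exact combination of the theorem, all elliptic defects of the double sums cancel against the transformation defects of the single sums, so that the whole left-hand side transforms with the same multiplier as $\eta^6\vartheta(z)/(\vartheta(2z)^2\vartheta(3z)\,b_{3,0}(z))$; here the standard transformation laws of $\vartheta$ and of the $\mathrm{A}_2$-theta functions $b_{3,k}$ are used essentially.

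With ellipticity of $D$ in hand, I would analyze its poles, which arise only from the vanishing of the denominators $1-w^2q^{\,\bullet}$ and $1-w^3q^{\,\bullet}$ and from the zeros of $\vartheta(2z)$, $\vartheta(3z)$, and $b_{3,0}(z)$ in a fundamental parallelogram. At each such $z$ I would compute the residue of every term and check that the contributions of the double sums, the single sums, and the theta quotient cancel; matching the residues at the poles of the level-$3$ Appell sums against those of the signature-$(2,2)$ sums is the analytic core. Once $D$ is holomorphic, elliptic, and has trivial multiplier after multiplication by the appropriate theta factor, it lies in a space that—by the same weight/index count as for the earlier theorems—contains only the zero function (or at worst constants, ruled out by comparing the leading term as $\mathrm{Im}(\tau)\to\infty$).

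The main obstacle I anticipate is the bookkeeping in the transformation step: tracking the boundary residuals produced by re-indexing the two-factor sums and showing that they assemble, with the correct signs and $q$-powers, into exactly the level-$3$ Appell sums of the theorem. The presence of the $b_{3,k}(z)$ denominators makes this strictly harder than in Theorem~\ref{lident1}, since they modify the multiplier system and must be carried consistently through both the transformation and the residue computations. Getting these normalizations right—rather than any individual analytic estimate—is where the real work lies.
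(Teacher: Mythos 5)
Your plan matches the paper's proof in substance: the paper proves Theorem~\ref{Id3} by noting it is almost identical to the argument for Theorem~\ref{Id2}, namely showing that the difference of the two sides satisfies the elliptic transformation law of a negative-index Jacobi form (with the elliptic defect of each signature-$(2,2)$ sum collapsing to level-$3$ Appell sums via a factorization of $1-w^{12}q^{\bullet}$), checking that all residues cancel, and concluding that the difference vanishes. Your proposal is this same strategy, so it is correct and essentially identical to the paper's route.
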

The paper is organized as follows. In Section 2, we review the definitions and a few properties of the Dedekind eta function and the theta functions which occur in the statement of the main results. We give the proofs of these results in Section 3. 

\section{Preliminaries} 
This section defines the eta and theta functions which appear in the theorems and the proofs, and recalls the elliptic transformations of the theta functions.
The Dedekind eta and Jacobi theta functions are defined by 
\begin{eqnarray}
&&\eta=\eta(\tau):=q^{\frac{1}{24}}\prod_{n=1}^\infty(1-q^n), \label{eq:eta}\\
&&\vartheta(z) =\vartheta(z; \tau):=i w^{\frac12}q^{\frac18}(q)_\infty (w q)_\infty \left(w^{-1} \right)_\infty
=i\sum_{r\in \frac{1}{2}+\mathbb{Z}}(-1)^{r-\frac{1}{2}}q^{\frac{r^2}{2}}w^r. \label{eq:varth}
\end{eqnarray}
Here $(a)_n = (a;q)_n := \prod_{j=0}^{n-1}\left(1- a q^j\right)$.
The Dedekind eta function is well-known to be a modular form of weight $1/2$, whereas the Jacobi theta function is a Jacobi form of weight and index $1/2$. 

In particular, we need the elliptic transformations of $\vartheta$ $(n\in\Z)$:
\begin{equation}\label{genshift}
\vartheta(z+n\tau)=(-1)^nq^{-\frac{n^2}{2}}w^{-n}\vartheta(z).
\end{equation} 
Moreover note that
\[
\vartheta' (0) =-2 \pi \eta^3 .
\]
We next define the theta functions $b_{r,k}$ which appeared in equation (\ref{blowup}). They sum over the $\operatorname{A}_{r-1}$ root lattices and are defined by
\be
b_{r,k}(z)=b_{r,k}(z;\tau):=\sum_{\sum_{j=1}^ra_j=0\atop a_j\in \frac{k}{r} + \mathbb{Z}} q^{-\sum_{j<\ell}a_ja_\ell}w^{\frac{1}{2}\sum_{j<\ell}\left(a_j-a_\ell\right)}. \label{eq:brk}
\ee 
We are mainly interested in the case $r=3$, the  cubic theta functions, when the theta functions are explicitly given by
\begin{eqnarray*}
&& b_{3,0}(z)=\sum_{m,n\in  \Z}
 q^{m^2+n^2+mn} w^{m+2n}, \\
&&  b_{3,1}(z)=\sum_{m,n \in \Z}
 q^{m^2+n^2+mn+m+n+\frac{1}{3}} w^{m+2n+1}.
\end{eqnarray*} 
These satisfy the following transformation formula
\begin{equation}\label{bshift}   
b_{3,k}(z+\tau)=q^{-1}w^{-2}b_{3,k}(z).  
\end{equation}
These ``cubic'' theta functions have been of great interest in the literature, starting with their introduction by the Borweins in \cite{Borwein}. In particular, the general theory of theta functions of shapes such as those in $b_{3,k}$ in a general context analogous to the usual Jacobian elliptic functions has been worked out in great detail by Schultz in \cite{Schultz}.

\section{Proofs of theorems}
\label{sec:proofs}

\begin{proof}[Proof of Theorem \ref{lident1}]
The idea is to show that the difference of the left and right-hand sides satisfies the elliptic transformation law of a Jacobi form of negative index but has no poles, and thus must vanish. Denote the right-hand and the left-hand sides of the claimed equation by $r$ and $\ell$, respectively. Then by \eqref{genshift}, we find that
$$
r(z+\tau) =q^8w^{16}r(z)
.
$$
We now aim to show that $\ell$ satisfies the same transformation law. Write $\ell=\ell_1+\ell_2$ with $\ell_1$ the first and $\ell_2$ the second line of the equation defining $\ell$. 

We first consider $\ell_1$. We call the first summand (resp. negative of the second) summand in $\ell_1$, $\ell_{11}$ (resp.\ $\ell_{12}$). We get
$$
\ell_{11} (z+\tau) -q^8 w^{16} \ell_{11}(z) = \sum_{k_1,k_2\in\Z}\frac{w^{-k_1-2k_2+6}q^{k_1^2+k_2^2+k_1k_2-2k_1-5k_2+8}}{\left(1-w^2q^{2k_1+k_2}\right)\left(1-w^2q^{k_2-k_1}\right)}\left(1-w^{12}q^{3k_1+6k_2}\right),
$$
where for the first summand, we shifted $k_2 \mapsto k_2 -2$.
We then substitute the identity
\begin{align}\label{decompose}
1-w^{12}q^{3k_1+6k_2}&=\left(1-w^2 q^{k_2-k_1}\right)\left(1+w^4 q^{k_1+2k_2}+w^8 q^{2k_1+4k_2}\right) \\
&\hspace{5mm} +\left(1-w^2 q^{2k_1+k_2}\right)\left(w^2 q^{k_2-k_1}+w^6 q^{3k_2}+w^{10} q^{k_1+5k_2}\right). \notag
\end{align}
The contribution from the first term, which we call $G_1$, gives
$$
G_1(z)=\sum_{k_1,k_2\in\Z}\frac{w^{-3k_1-2k_2+6}q^{3k_1^2+k_2^2+3k_1k_2-8k_1-5k_2+8}}{1-w^2q^{k_2}}
\left(1+w^4 q^{3k_1+2k_2} +w^8 q^{6k_1+4k_2 }\right),
$$
where we let $k_2\mapsto k_2-2k_1$ and then $k_1\mapsto -k_1$. 
Similarly the contribution from the second term in (\ref{decompose}), denoted $G_2$, is
\[
G_2(z)=\sum_{k_1,k_2\in\Z}\frac{w^{-3k_1-2k_2+8}q^{3k_1^2+k_2^2+3k_1k_2-7k_1-4k_2+8}}{1-w^2q^{k_2}}
\left(1+w^4 q^{3k_1+2k_2} +w^8 q^{6k_1+4k_2}\right),
\]
where we sent $k_2\mapsto k_2+k_1$.

Next
$$
\ell_{12} (z+\tau) -q^{8} w^{16} \ell_{12}(z)=\sum_{k_1,k_2\in\Z}\frac{w^{-k_1-2k_2+5}q^{k_1^2+k_2^2+k_1k_2-k_1-4k_2+6}}{\left(1-w^2q^{2k_1+k_2+1}\right)\left(1-w^2q^{k_2-k_1}\right)}\left(1-w^{12}q^{3k_1+6k_2+3}\right),
$$
where in the first term, we shifted $k_2 \mapsto k_2 -2$.
Similarly as before, we decompose
\begin{align*}
1-w^{12}q^{3k_1+6k_2+3}&=\left(1-w^2q^{k_2-k_1}\right) \left(1+w^4q^{k_1+2k_2+1}+w^8q^{2k_1+4k_2+2}\right)\\
&\hspace{20mm} +\left(1-w^2q^{2k_1+k_2+1}\right) \left(w^2q^{k_2-k_1}+w^6q^{3k_2+1}+w^{10}q^{k_1+5k_2+2}\right).
\end{align*}
We call the contribution from the first term $G_3$. After changing variables to $k_2 \mapsto k_2 -2k_1-1$ and then $k_1 \mapsto -k_1-1$, we get
\be 
\label{G3}
G_{3}(z)=  
\sum_{k_1,k_2\in \mathbb{Z}}\frac{w^{-3k_1-2k_2+4}q^{3k_1^2+k_2^2+3k_1k_2-4k_1-3k_2+4}}{1-w^{2}q^{k_2}}\left(1+w^4q^{3k_1+2k_2+2}+w^{8}q^{6k_1+4k_2+4}\right).
\ee
The contribution of the second summand $G_4$ is, shifting $k_2 \mapsto k_2 +k_1$,
\be
\label{G4}
G_{4}(z)=\sum_{k_1,k_2\in \mathbb{Z}}\frac{w^{-3k_1-2k_2+7}q^{3k_1^2+k_2^2+3k_1k_2-5k_1-3k_2+6}}{1-w^{2}q^{k_2}}\left(1+w^4q^{3k_1+2k_2+1}+w^{8}q^{6k_1+4k_2+2}\right).
\ee

Substituting the expressions for $G_j$ we find that
\begin{align*}
&G_1(z)+G_2(z)-G_3(z)-G_4(z)=\sum_{k_1, k_2 \in\Z}\frac{w^{-3k_1-2k_2}q^{3k_1 ^2 +k_2 ^2 +3k_1 k_2+8}}{1-w^2q^{k_2}} \Big (w^6 q^{-8k_1-5k_2} \\
& +w^{10} q^{-5k_1-3k_2} +w^{14} q^{-2k_1-k_2}+w^8 q^{-7k_1-4k_2}+w^{12} q^{-4k_1-2k_2}+w^{16} q^{-k_1}- w^4 q^{-4k_1-3k_2-4}\\
& -w^{8} q^{-k_1-k_2-2} -w^{12} q^{2k_1+k_2} - w^7 q^{-5k_1-3k_2-2}-w^{11} q^{-2k_1-k_2-1}-w^{15} q^{k_1+k_2} \Big ) .
\end{align*}
We label the twelve terms in the brackets by $j=1,2,\dots,12$. For $j=1,2,4$ and 10, we shift $k_1\mapsto k_1+1$ and for $j=9$ we shift $k_1\mapsto k_1-1$. This gives
\begin{align*} 
\sum_{k_1, k_2 \in\Z}&\frac{w^{-3k_1-2k_2}q^{3k_1 ^2 +k_2 ^2 +3k_1 k_2+8}}{1-w^2q^{k_2}}\Big (w^3 q^{-2k_1-2k_2-5}+w^{7} q^{k_1-2}+w^{14} q^{-2k_1-k_2}+w^5 q^{-k_1-k_2-4} \\
&+w^{12} q^{-4k_1-2k_2}+w^{16} q^{-k_1}- w^4 q^{-4k_1-3k_2-4}-w^{8} q^{-k_1-k_2-2}-w^{15} q^{-4k_1-2k_2+1}\\
& - w^4 q^{k_1-4}-w^{11} q^{-2k_1-k_2-1}-w^{15} q^{k_1+k_2} \Big ) .
\end{align*}
We denote the terms in the brackets of the last expression by $\mathcal{D}_j$. Substituting now 
\begin{align*}
\mathcal{D}_1&=w^3 q^{-2k_1-2k_2-5}\left(1-w^2q^{k_2}+w^2q^{k_2}\right),\hspace{5mm} \mathcal{D}_6=w^{14} q^{-k_1-k_2}\left(w^2q^{k_2}-1+1\right),\\
 \mathcal{D}_7&=-w^4 q^{-4k_1-3k_2-4}\left(1-w^2q^{k_2}+w^2q^{k_2}\right), \hspace{5mm} \mathcal{D}_{12}=-w^{13} q^{k_1}\left(w^2q^{k_2}-1+1\right) 
\end{align*}
and dividing out in these terms $1-w^2q^{k_2}$, we obtain 
\begin{equation}\label{terms}
\begin{aligned} 
\sum_{k_1, k_2 \in\Z}&\frac{w^{-3k_1-2k_2}q^{3k_1 ^2 +k_2 ^2 +3k_1 k_2+8}}{1-w^2q^{k_2}}\Big (w^5 q^{-2k_1-k_2-5}+w^{7} q^{k_1-2} +w^{14} q^{-2k_1-k_2} +w^5 q^{-k_1-k_2-4} \\
&\hspace{15mm}+w^{12} q^{-4k_1-2k_2}+w^{14} q^{-k_1-k_2}- w^6 q^{-4k_1-2k_2-4}-w^{8} q^{-k_1-k_2-2}  \\
&  \hspace{15mm}-w^{15} q^{-4k_1-2k_2+1} - w^4 q^{k_1-4}-w^{11} q^{-2k_1-k_2-1}-w^{13} q^{k_1} \Big )  \\
&\hspace{-7mm}+\sum_{k_1, k_2 \in\Z} w^{-3k_1-2k_2}q^{3k_1 ^2 +k_2 ^2 +3k_1 k_2+8}\Big ( w^3 q^{-2k_1-2k_2-5}-w^{14} q^{-k_1-k_2}-w^4 q^{-4k_1-3k_2-4}+w^{13} q^{k_1}\Big ).
\end{aligned}
\end{equation}
The last line is a sum of four theta functions $T_j$ ($j=1,\dots,4$), which we show vanishes. To this end, substitute in $T_1$ and $T_2$, $k_1\mapsto -k_1$ and then $k_2\mapsto k_2+3k_1$. Next replacing in $T_1$, $k_1\mapsto k_1+1$, and $k_2\mapsto k_2-2$ shows that $T_1+T_3=0$, and replacing in $T_2$, $k_1\mapsto k_1+1$ and $k_2\mapsto k_2-1$, shows that $T_2+T_4=0$. Grouping the terms of the other lines of (\ref{terms}) gives our final expression for 
\[
\ell_1(z+\tau)-q^8w^{16}\ell_1(z)=G_1(z)+G_2(z)-G_3(z)-G_4(z),
\]
namely
\begin{eqnarray} 
\label{diffl1}
&&\left(1-w^3q^2+w^9q^4\right) \sum_{k_1, k_2 \in\Z}\frac{w^{-3k_1-2k_2}q^{3k_1 ^2 +k_2 ^2 +3k_1 k_2+8}}{1-w^2q^{k_2}}\left(w^5q^{-k_1-k_2-4}-w^4q^{k_1-4}\right)\nonumber\\
&&+\left(1-w^6q^4+w^9q^5\right) \sum_{k_1, k_2 \in\Z}\frac{w^{-3k_1-2k_2}q^{3k_1 ^2 +k_2 ^2 +3k_1 k_2+8}}{1-w^2q^{k_2}}\left(w^5q^{-2k_1-k_2-5}-w^6q^{-4k_1-2k_2-4}\right).\nonumber
\end{eqnarray}

Next we consider the second line $\ell_2$, and set
$$
\lambda_2(z):=\frac{-i\vartheta(2z)}{\eta^3}\ell_2(z).
$$
Using (\ref{genshift}) we thus aim to compute the combination
\begin{equation}\label{lstardiff}
L_2(z):=\lambda_2(z+\tau)-q^6w^{12}\lambda_2(z).
\end{equation}
We do this termwise and only carry out the details for the first term. This yields the following contribution to (\ref{lstardiff})
\begin{multline*}
\label{1stterm}
\sum_{k\in\Z}\frac{w^{-3k+2}q^{3k^2-k+2}}{1-w^3q^{3k+3}} -\sum_{k\in\Z}\frac{w^{-3k+14}q^{3k^2+2k+6}}{1-w^3q^{3k}}
=\sum_{k\in\Z}\frac{w^{-3k+5}q^{3k^2-7k+6}}{1-w^3q^{3k}} \left(1-w^9q^{9k}\right),\\
= \sum_{k\in\Z}w^{-3k+2}q^{3k^2-k+2} +\sum_{k\in\Z}w^{-3k+8}q^{3k^2-4k+6}+\sum_{k\in\Z}w^{-3k+11}q^{3k^2-k+6},
\end{multline*}
where we shifted $k\mapsto k-1$ in the first summand on the first line. To obtain the second line we used
\[
\left(1-w^9 q^{9k}\right) =\left(1-w^3 q^{3k}\right)\left(1+w^3 q^{3k} +w^6 q^{6k}\right),
\] 
and mapped $k\mapsto k+1$ in the first summand.
 Treating the other terms similarly, we obtain that
\begin{align}
\label{L2z}
 L_2 (z)&=\left(1 -w^3 q^2 +w^{9} q^4 \right)\sum_{k\in\Z} \left(w^{-3k+2}q^{3k^2-k+2}-w^{-3k+4}q^{3k^2-5k+4}\right)
 \\
 &\hspace{5mm}+\left(1 -w^6 q^4+w^{9} q^5 \right)\sum_{k\in\Z} \left(w^{-3k+1}q^{3k^2-2k+1}-w^{-3k+2}q^{3k^2-4k+2}\right).\nonumber
\end{align}

Summarizing, we have to show the following identity:
\[
\label{eq:redident}
\ell_1(z+\tau)-q^8w^{16}\ell_1(z)+q^2w^4\frac{i\eta^3}{\vartheta(2z)}L_2(z)=0.
\]
Since both $\ell_1(z+\tau)-q^8w^{16}\ell_1(z)$ (see \eqref{diffl1}) and $L_2$ (see \eqref{L2z}) contain a term with $(1 -w^3 q^2 +w^{9} q^4)$ and $(1 -w^6 q^4+w^{9} q^5)$, the identity in particular holds if the following two identities hold:
\begin{multline}\label{2.11}
\sum_{k_1, k_2 \in\Z} \frac{w^{-3k_1-2k_2+4}q^{3k_1 ^2 +k_2 ^2 +3k_1 k_2+4}}{1-w^2q^{k_2}} \left(wq^{-k_1-k_2}-q^{k_1}\right)\\
=\frac{i\eta^3}{\vartheta(2z)} \sum_{k\in\Z}\left(w^{-3k+8}q^{3k^2-5k+6}  -w^{-3k+6}q^{3k^2-k+4} \right),
 \end{multline}
\begin{multline}\label{2.12}
\sum_{k_1, k_2 \in\Z} \frac{w^{-3k_1-2k_2+5}q^{3k_1 ^2 +k_2 ^2 +3k_1 k_2+3}}{1-w^2q^{k_2}} \left(q^{-2k_1-k_2}-wq^{-4k_1-2k_2+1}\right)\\
=\frac{i\eta^3}{\vartheta(2z)}\sum_{k\in\Z}\left(w^{-3k+6}q^{3k^2-4k+4}   -w^{-3k+5}q^{3k^2-2k+3}  \right). 
 \end{multline} 
Denote the left-hand side of (\ref{2.11}) by $\mathcal{L}_1$ and the right-hand side by $\mathcal{R}_1$, and similarly define $\mathcal{L}_2$ and $\mathcal{R}_2$ for (\ref{2.12}). We again use elliptic transformations to prove these identities. We obtain for the shift $z\mapsto z+\tau$ of the $\mathcal{R}_j$'s, using \eqref{genshift} and the shift $k\mapsto k+1$ in the first summand, 
\[
\mathcal{R}_1 (z+\tau)=-w^4q^8\mathcal{R}_2(z), \qquad \mathcal{R}_2 (z+\tau)=-wq^4\mathcal{R}_1(z).
\]

This implies that we have to prove the following identities for the left hand sides:
\begin{eqnarray}\label{ident31}
\mathcal{L}_1(z+\tau)+w^4q^8\mathcal{L}_2(z)=0,\qquad \mathcal{L}_2(z+\tau)+w q^4\mathcal{L}_1(z)=0. 
\end{eqnarray}
In $\mathcal{L}_1(z+\tau)$ we change $k_1 \mapsto k_1 +1$ and $k_2 \mapsto k_2 -2$. After adding $w^4q^8 \mathcal{L}_2(z)$, we can divide out by $1-w^2q^{k_2}$ to obtain
\begin{align*}
&\mathcal{L}_1(z+\tau)+w^4q^8\mathcal{L}_2(z)\\
&=\sum_{k_1, k_2 \in\Z} w^{-3k_1-2k_2+1}q^{3k_1 ^2 +k_2 ^2 +3k_1 k_2+5}\left( w^5q^{-4k_1-4k_2+7}\left(1+w^2q^{k_2}\right)-w^4q^{-2k_1-3k_2+6}\left(1+w^2q^{k_2}\right)\right). 
\end{align*}
This is a sum of four theta functions $S_j$.
A direct calculation shows that $S_1=-S_3$ and $S_2=-S_4$, which proves the first identity of (\ref{ident31}). The second identity in (\ref{ident31}) is proven similarly.

To finish the proofs of (\ref{2.11}) and (\ref{2.12}), we are left to show (since we have the elliptic transformation law of negative index Jacobi forms) that both sides have the same poles and the same residues.

We start with $\mathcal{R}_1$ which has {at most} simple poles for $z\in\frac12\Z+\tau\frac12\Z$.

Firstly consider the pole at $z=0$. We compute the residue as
$$
-\frac{i}{4\pi}\sum_{k\in\Z}\left(q^{3k^2-5k+6}  -q^{3k^2-k+4} \right)=0,
$$
where we shifted in the first summand $k\mapsto -k+1$ to find that the residue vanishes.  At $z=1/2$, we obtain the residue
$$
\frac{i}{4\pi}\sum_{k\in\Z} \left((-1)^{k}q^{3k^2-5k+6}  -(-1)^{k}q^{3k^2-k+4} \right)=-\frac{i}{2\pi}\sum_{k\in\Z}(-1)^kq^{3k^2-k+4},
$$
where we let $k\mapsto -k+1$ in the first summand. 
At $z=\tau/2$, the residue is
$$
\frac{iq^{\frac{1}{2}}}{4\pi}  \sum_{k\in\Z}\left(q^{-\frac{3k}{2}+4}q^{3k^2-5k+6}  -q^{-\frac{3k}{2}+3}q^{3k^2-k+4} \right)
=\frac{i}{4\pi}\sum_{k\in\Z} \left( q^{3k^2+\frac{k}{2}+7} - q^{3k^2-\frac{5k}{2}+\frac{15}{2}}\right),
$$
shifting again $k\mapsto -k+1$ in the first summand. Moreover, we used that
\[
\vartheta' (\tau ) =-q^{-\frac{1}{2}} \frac{\partial}{\partial z} \left[ w^{-1} \vartheta (z) \right]_{z=0} = -q^{-\frac{1}{2}} \vartheta' (0)
\]
since $\vartheta$ is odd as a function of $z$. Finally at $z=\tau/2+1/2$, our residue becomes
\begin{multline*}
-\frac{iq^{\frac{1}{2}}}{4\pi} \sum_{k\in\Z}\left( (-1)^k q^{-\frac{3k}{2}+4} q^{3k^2-5k+6}  - (-1)^kq^{-\frac{3k}{2}+3}q^{3k^2-k+4} \right)\\
=\frac{i}{4\pi}\sum_{k\in\Z} \left( (-1)^kq^{3k^2+\frac{k}{2}+7} +(-1)^kq^{3k^2-\frac{5k}{2}+\frac{15}{2}}\right),
\end{multline*}
making again the shift $k\mapsto -k+1$ in the first summand. 

We next turn to $\mathcal{L}_1$. {We see that we again have at most simple poles at $\frac{1}{2} \mathbb{Z} + \tau \frac{1}{2}\mathbb{Z}$}. For $z=0$ a pole {\bf can} only come from the term $k_2=0$ yielding the contribution
$$ 
\frac{i}{4\pi}\sum_{k_1\in\Z}q^{3k_1^2+4}\left(q^{-k_1}-q^{k_1}\right)=0  
$$
by changing $k_1\mapsto-k_1$ in the second term. At $z=1/2$ again the pole comes from $k_2=0$ yielding
$$
\frac{i}{4\pi}\sum_{k_1\in\Z}(-1)^{k_1}q^{3k_1^2+4}\left(-q^{-k_1}-q^{k_1}\right)=-\frac{i}{2\pi}\sum_{k_1\in\Z}(-1)^kq^{3k_1^2-k_1+4}
$$
which matches the residue of $\mathcal{R}_1$. At $z=\tau/2$, the pole comes from $k_2=-1$ yielding
$$
\frac{i}{4\pi}\sum_{k_1\in\Z}q^{3k_1^2-\frac{9k_1}{2}+8}\left(q^{-k_1+\frac32}-q^{k_1}\right)
=\frac{i}{4\pi}\left(\sum_{k_1\in\Z}q^{3k_1^2+\frac{k_1}{2}+7}-\sum_{k_1\in\Z}q^{3k_1^2-\frac{5k_1}{2}+\frac{15}{2}}\right),
$$
where for the first summand  we changed $k_1\mapsto k_1+1$, and in the second summand $k_1\mapsto -k_1+1$. 

Finally at $z=\tau/2+1/2$ the pole comes from $k_2=-1$ yielding 
\begin{align*}
&-\frac{i}{4\pi}\sum_{k_1\in\Z}(-1)^{k_1}q^{3k_1^2-\frac{9k_1}{2}+8}\left(q^{-k_1+\frac32}+q^{k_1}\right)\\
&\hspace{20mm}=\frac{i}{4\pi}\left(\sum_{k_1\in\Z}(-1)^{k_1}q^{3k_1^2+\frac{k_1}{2}+7}+\sum_{k_1\in\Z}(-1)^{k_1}q^{3k_1^2-\frac{5k_1}{2}+\frac{15}{2}}\right),
\end{align*}
where for the first summand we shifted again $k_1\mapsto k_1+1$, and in the second summand $k_1\mapsto -k_1+1$. So we see that all residues of $\mathcal{L}_1$ and $\mathcal{R}_1$ match. The calculation of the residues of $\mathcal{L}_2$ and $\mathcal{R}_2$ is similar to those $\mathcal L_1$ and $\mathcal R_1$, so we omit the calculation.

At this point, we have proven that the left and right hand sides of Theorem \ref{Iident1} transform identically under the elliptic transformation $z\mapsto z+\tau$. It is straightforward to check that they also transform the same under $z\mapsto z+1$. To finish the proof, one needs again to verify that the poles and residues of both sides match. To this end, we multiply both sides by $\frac{\vartheta(2z)^2}{\eta^6}$. We see that both sides still satisfy the elliptic transformation law of negative index Jacobi forms with simple poles at $\frac{m}{3}\tau+\frac{n}{3}$, with $(m,n)=(0,1)$, $(0,2)$, $(1,0)$, $(1,1)$, $(1,2)$, $(2,0)$, $(2,1)$ and $(2,2)$. Note that at $(0,0)$ we have a removable singularity. We omit the details of the calculations and only list the residues. For $(m,n)=(0,1)$, the residue equals $\frac{\vartheta(\frac13)}{6\pi \eta^3}$; for $(m,n)=(0,2)$, $-\frac{\vartheta(\frac13)}{6\pi \eta^3}$; for $(m,n)=(1,0)$, $\frac{q^{\frac12}\vartheta(\frac{\tau}{3})}{6\pi \eta^3}$; for $(m,n)=(1,1)$, $-\frac{q^{\frac12}\vartheta(\frac{\tau+1}{3})}{6\pi \eta^3}$; for $(m,n)=(1,2)$, $\frac{q^{\frac12}\vartheta(\frac{\tau+2}{3})}{6\pi \eta^3}$; for $(m,n)=(2,0)$, $-\frac{q^{2}\vartheta(\frac{2\tau}{3})}{6\pi \eta^3}$; for $(m,n)=(2,1)$, $\frac{q^{2}\vartheta(\frac{2\tau+1}{3})}{6\pi \eta^3}$;
and for $(m,n)=(2,2)$, $-\frac{q^{2}\vartheta(\frac{2\tau+2}{3})}{6\pi \eta^3}$.

\end{proof} 

\begin{proof}[Proof of Theorem \ref{Id2}]
We follow the same strategy as in the proof for Theorem \ref{lident1}. Since many steps are almost identical to those in the proof of Theorem \ref{Iident1}, we only give the main intermediate results. We denote the first line of Theorem \ref{Id2} by $\ell^*_{1}(z)$. First we consider the periodicity of the generalized Appell functions in $\ell^*_{1}(z)$ under $z\mapsto z+\tau$ without the $b_{3,k}$. We define $\ell^*_{11}$ as $b_{3,0}$ times the first summand in $\ell^*_{1}$. Sending $ w \mapsto wq$ in $\ell^*_{11}$ and adding $-q^8 w^{16}$ times the original function, we have
\begin{eqnarray} 
&& \label{diffA1} \qquad \sum_{k_1,k_2\in \mathbb{Z}}\frac{w^{-k_1-2k_2+7}q^{k_1^2+k_2^2+k_1k_2-3k_1-6k_2+\frac{32}{3}}}{(1-w^{2}q^{2k_1+k_2-1})(1-w^{2}q^{k_2-k_1})}\left(1-w^{12}q^{3k_1+6k_2-3}\right).
\end{eqnarray}
Similar to the proof of Theorem \ref{Iident1}, we substitute the equality
\begin{align} 
\label{identdiff}
1-w^{12}q^{3k_1+6k_2-3}=&\left(1-w^2q^{k_2-k_1}\right)\left(1+w^4q^{k_1+2k_2-1}+w^8q^{2k_1+4k_2-2}\right)\nonumber\\
&\hspace{20mm}+\left(1-w^2q^{2k_1+k_2-1}\right)\left(w^2q^{k_2-k_1}+w^6q^{3k_2-1}+w^{10}q^{k_1+5k_2-2}\right), 
\end{align} 
and write (\ref{diffA1}) as $G^*_{1}+G^*_{2}$, where $G^*_{1}$ and $G^*_{2}$ come from the two terms in (\ref{identdiff}). They are given by  
\begin{eqnarray*} 
&&G^*_{1}(z)=\sum_{k_1,k_2\in \mathbb{Z}}\frac{w^{-3k_1-2k_2+5}q^{3k_1^2+k_2^2+3k_1k_2-6k_1-4k_2+\frac{17}{3}}}{1-w^{2}q^{k_2}}\left(1+w^4q^{3k_1+2k_2+1}+w^8q^{6k_1+4k_2+2}\right),\\
&&G^*_{2}(z)=\sum_{k_1,k_2\in \mathbb{Z}}\frac{w^{-3k_1-2k_2+6}q^{3k_1^2+k_2^2+3k_1k_2-3k_1-2k_2+\frac{14}{3}}}{1-w^{2}q^{k_2}}\left(1+w^4q^{3k_1+2k_2+2}+w^{8}q^{6k_1+4k_2+4}\right),
\end{eqnarray*} 
where for $G^*_1$ we shifted $k_2\mapsto k_2-2k_1+1$ followed by $k_1\mapsto -k_1$, and for $G^*_2$ we shifted $k_2\mapsto k_2+k_1$ followed by $k_1\mapsto k_1+1$. 
After a few more elementary manipulations, we see that we can write $G^*_{1}+G^*_{2}$ as  
\begin{eqnarray}
\label{G12G22} 
&&\left(1+w^6q^4+w^9q^5\right)\sum_{k_1,k_2\in\Z} \frac{w^{-3k_1-2k_2+4}q^{3k_1^2+k_2^2+3k_1k_2+\frac{8}{3}}}{1-w^2q^{k_2}} \nonumber \\
&&+\left(1+w^3q^2+w^9q^4\right) \sum_{k_1,k_2\in\Z} \frac{w^{-3k_1-2k_2+8}q^{3k_1^2+k_2^2+3k_1k_2-3k_1-k_2+\frac{14}{3}}}{1-w^2q^{k_2}}\\
&&+wq^{\frac{5}{3}}\left(1+w^3q^2+w^6q^4\right)b_{3,0}(z). \nonumber
\end{eqnarray} 

We define $\ell^*_{12}$ as $-b_{3,1}$ times the second summand in $\ell^*_{1}$. Comparing with Theorem \ref{Iident1}, we see that $\ell^*_{12}$ is identical to $\ell_{12}$. Therefore, $\ell^*_{12}(z+\tau)-q^8w^{16}\ell^*_{12}(z)$ equals $G_3+G_4$ with $G_3$ and $G_4$ given respectively in (\ref{G3}) and (\ref{G4}). With a few manipulations, we bring $G_{3}+G_{4}$ in the following form:
\begin{align} 
&\left(1+w^9q^4\right) \sum_{k_1,k_2\in\Z} \frac{w^{-3k_1-2k_2+9}q^{3k_1^2+k_2^2+3k_1k_2-5k_1-2k_2+6}}{1-w^2q^{k_2}} \nonumber \\
&+\left(1+w^9q^5\right) \sum_{k_1,k_2\in\Z} \frac{w^{-3k_1-2k_2+6}q^{3k_1^2+k_2^2+3k_1k_2-4k_1-2k_2+4}}{1-w^2q^{k_2}} \label{G32G42} \\
&+ w^6q^4\sum_{k_1,k_2\in\Z} \frac{w^{-3k_1-2k_2+5}q^{3k_1^2+k_2^2+3k_1k_2-2k_1-k_2+3}}{1-w^2q^{k_2}}  \nonumber \\
&+w^3q^2\sum_{k_1,k_2\in\Z} \frac{w^{-3k_1-2k_2+7}q^{3k_1^2+k_2^2+3k_1k_2-k_1+4}}{1-w^2q^{k_2}} +wq^{\frac{5}{3}}\left(1+w^3q^2+w^6q^4\right)\,b_{3,1}(z), \nonumber
\end{align}
where we pulled out the factors $w^6q^4$ and $w^3q^2$ for later convenience. 
 
We continue with the second line of Theorem \ref{Id2}, which we denote by $\ell^*_{2}$. 
Defining 
$$
\lambda^*_{2}(z):=\frac{-i\vartheta(2z)b_{3,0}(z)}{\eta^3}\ell^*_{2}(z),
$$
we determine for the shift $z\mapsto z+\tau$ that
\begin{align}
\label{ell22}
&\lambda^*_{2}(z+\tau)-w^{12}q^6\lambda^*_{2}(z)\nonumber \\
&\hspace{2mm}=\left(1+w^3q^2+w^9q^4\right)\sum_{k\in \Z}w^{-3k+3}q^{3k^2-3k+ \frac{8}{3}}+\left(1+w^6q^4+w^9q^5\right)\sum_{k\in \Z} w^{-3k}q^{3k^2+\frac{2}{3}}.
\end{align}
Similarly, we denote the third line by $\ell^*_{3}$ and define $\lambda^*_{3}(z):=\frac{i\vartheta(2z)b_{3,1}(z)}{\eta^3}\ell^*_{3}(z)$. We obtain
\begin{eqnarray}
\label{ell32}
\lambda^*_{3}(z+\tau)-w^{12}q^6\lambda^*_{3}(z)&=&\left(1+w^9q^4\right)\sum_{k\in \Z} w^{-3k+1}q^{3k^2+k+2}+w^6q^4\sum_{k\in\Z}w^{-3k+1}q^{3k^2-2k+1}\nonumber \\
&&+\left(1+w^9q^5\right)\sum_{k\in \Z} w^{-3k+2}q^{3k^2-4k+2}+w^3q^2\sum_{k\in\Z}w^{-3k+2}q^{3k^2-k+2}.
\end{eqnarray}

Combining these results, we can again find identities which imply that the required periodicity holds as in Theorem \ref{Iident1}. To determine the identities, first note that the last terms of (\ref{G12G22}) and (\ref{G32G42}) cancel each other in the computation of $\ell^*_{1}(z+\tau)-q^9w^{18}\ell^*_{1}(z)$.  The first identity then follows by combining the sums $\Sigma_{k_1,k_2\in \mathbb{Z}}$ in (\ref{G12G22}),  (\ref{G32G42}), (\ref{ell22}), and (\ref{ell32}) which are multiplied $(1+w^9q^4)$ on the left: % Added number for reference later}
\begin{eqnarray}\label{FirstId}
\nonumber&&\frac{1}{b_{3,0}(z)}\sum_{k_1,k_2\in\Z} \frac{w^{-3k_1-2k_2+8}q^{3k_1^2+k_2^2+3k_1k_2-3k_1-k_2+\frac{14}{3}}}{1-w^2q^{k_2}}\\
&&-\frac{1}{b_{3,1}(z)}\sum_{k_1,k_2\in\Z} \frac{w^{-3k_1-2k_2+9}q^{3k_1^2+k_2^2+3k_1k_2-5k_1-2k_2+6}}{1-w^2q^{k_2}}\\
\nonumber&& +w^4q^2 \frac{i\eta^3}{\vartheta(2z)} \left( \frac{1}{b_{3,0}(z)} \sum_{k\in \Z}w^{-3k+3}q^{3k^2-3k+\frac{8}{3}} - \frac{1}{b_{3,1}(z)} \sum_{k\in \Z}w^{-3k+1}q^{3k^2+k+2}\right)=0.
\end{eqnarray} 
The second identity follows by combining the sums $\Sigma_{k_1,k_2\in \mathbb{Z}}$ multiplied by $(1+w^9q^5)$
\begin{eqnarray*}
&&\frac{1}{b_{3,0}(z)}\sum_{k_1,k_2\in\Z} \frac{w^{-3k_1-2k_2+4}q^{3k_1^2+k_2^2+3k_1k_2+\frac{8}{3}}}{1-w^2q^{k_2}} \\
&&-\frac{1}{b_{3,1}(z)}\sum_{k_1,k_2\in\Z} \frac{w^{-3k_1-2k_2+6}q^{3k_1^2+k_2^2+3k_1k_2-4k_1-2k_2+4}}{1-w^2q^{k_2}}\\
&& +w^4q^2 \frac{i\eta^3}{\vartheta(2z)} \left( \frac{1}{b_{3,0}(z)} \sum_{k\in \Z}w^{-3k}q^{3k^2+\frac{2}{3}} - \frac{1}{b_{3,1}(z)} \sum_{k\in \Z}w^{-3k+2}q^{3k^2-4k+2}\right)=0.
\end{eqnarray*} 
The third identity follows by combining the sums $\Sigma_{k_1,k_2\in \mathbb{Z}}$ multiplied by $w^6 q^4$
\begin{eqnarray*}
&&\frac{1}{b_{3,0}(z)}\sum_{k_1,k_2\in\Z} \frac{w^{-3k_1-2k_2+4}q^{3k_1^2+k_2^2+3k_1k_2+\frac{8}{3}}}{1-w^2q^{k_2}}\\
&&-\frac{1}{b_{3,1}(z)}\sum_{k_1,k_2\in\Z} \frac{w^{-3k_1-2k_2+5}q^{3k_1^2+k_2^2+3k_1k_2-2k_1-k_2+3}}{1-w^2q^{k_2}}\\
&& +  w^4q^2\frac{i\eta^3}{\vartheta(2z)}\left( \frac{1}{b_{3,0}(z)} \sum_{k\in \Z}w^{-3k}q^{3k^2+\frac{2}{3}} - \frac{1}{b_{3,1}(z)} \sum_{k\in \Z}w^{-3k+1}q^{3k^2-2k+1}\right)=0.
\end{eqnarray*} 
The fourth identity follows by combining the sums $\Sigma_{k_1,k_2\in \mathbb{Z}}$ multiplied by $w^3 q^2$
\begin{eqnarray*}  
&&\frac{1}{b_{3,0}(z)}\sum_{k_1,k_2\in\Z} \frac{w^{-3k_1-2k_2+8}q^{3k_1^2+k_2^2+3k_1k_2-3k_1-k_2+\frac{14}{3}}}{1-w^2q^{k_2}}\\
&&-\frac{1}{b_{3,1}(z)}\sum_{k_1,k_2\in\Z} \frac{w^{-3k_1-2k_2+7}q^{3k_1^2+k_2^2+3k_1k_2-k_1+4}}{1-w^2q^{k_2}}\\
&& +w^4q^2 \frac{i\eta^3}{\vartheta(2z)} \left( \frac{1}{b_{3,0}(z)} \sum_{k\in \Z}w^{-3k+3}q^{3k^2-3k+\frac{8}{3}} - \frac{1}{b_{3,1}(z)} \sum_{k\in \Z}w^{-3k+2}q^{3k^2-k+2}\right)=0.
\end{eqnarray*}
We prove the first identity \eqref{FirstId} % Referenced new number above, as in the new file there is no longer an equation labelled Identity 1. Several references to that equation number changed below too.} 
and leave the proofs of the other identities as an exercise for the reader. First we argue that the function satisfies the elliptic transformation law of a negative index Jacobi form. To this end, consider the change of variables $z\mapsto z+2\tau$. For the third term of the left-hand side of \eqref{FirstId} we get after letting $z\mapsto z+2\tau$ and
using \eqref{genshift} and \eqref{bshift} that
\begin{eqnarray*}
&&\frac{w^4q^{10}}{b_{3,0}(z+2\tau)} \frac{i\eta^3}{\vartheta(2z+4\tau)}\sum_{k\in \mathbb{Z}}w^{-3k+3}q^{3k^2-9k+\frac{26}{3}} \overset{k\mapsto k+1}{=}\frac{w^{16}q^{22}}{b_{3,0}(z)}\frac{i\eta^3}{\vartheta(2z)}\sum_{k\in\mathbb{Z}}w^{-3k}q^{3k^2-3k+\frac{5}{3}},
\end{eqnarray*}    
which is $w^9q^{20}$ times the original third term. One can show similarly that the first term plus the second term, and the fourth term, are multiplied by $w^9q^{20}$ under $z\mapsto z+2\tau$. It is also easy to check that the two sides have the same behavior under $z\mapsto z+1$. Therefore, the left-hand side satisfies the elliptic transformation law of a Jacobi form of negative index. If we multiply \eqref{FirstId} by $b_{3,0}b_{3,1}$, we still have a function which satisfies the elliptic transformation law of a Jacobi form of negative index, since $b_{3,k}(z+2\tau)=w^{-4}q^{-4}b_{3,k}(z)$. Under the transformation $z\mapsto z+\tau$, \eqref{FirstId} becomes the third identity multiplied by $q^5$. Thus, we can work as before with a vector-valued identity, and consider the periodicity $z\mapsto z+\tau$. 
  
Since the index is negative, what remains is to show that the residues within the parallelogram spanned by $1$ and $\tau$ of the left-hand side of \eqref{FirstId} vanish. This is a tedious, but straightforward proof, which we omit.  The proof of the Identities 2, 3, 4,  and, consequently, Theorem \ref{Id2} follow similarly.
\end{proof}

\begin{proof}[Proof of Theorem \ref{Id3}]
The proof of Theorem \ref{Id3} is almost identical to the proof of Theorem \ref{Id2}, and so we omit the details.
\end{proof}

\providecommand{\href}[2]{#2}\begingroup\raggedright\endgroup

\end{document}